\documentclass[11pt]{article}
\usepackage[T1]{fontenc}
\usepackage{lmodern}
\usepackage{microtype}
\usepackage{amsmath,amssymb,mathtools,amsthm}
\usepackage{tikz}
\usepackage[colorlinks=true,linkcolor=black,urlcolor=blue]{hyperref}
\usepackage[capitalise]{cleveref}
\newtheorem{proposition}{Proposition}[section]
\theoremstyle{definition}
\newtheorem{example}{Example}[section]
\newtheorem*{remark}{Remark}

\newcommand{\bbR}{\mathbb{R}}
\newcommand{\Reduction}{\mathcal{R}}

\title{Networks with Memory and Isospectral Reduction}
\author{Grant Molnar}
\date{}

\begin{document}
\maketitle

\begin{abstract}
Restricting the linear dynamics of a graph to selected vertices generally introduces memory, because interactions can pass through hidden vertices before returning. The resulting rational memory kernels give isospectral reduction a dynamical interpretation and identify latent graph symmetries with symmetries of the observed evolution.
\end{abstract}

\section{Introduction}

We consider a finite undirected graph \(G\) with vertex set \(V(G)\) and real edge weights. Its adjacency matrix is the symmetric matrix \(A \in \bbR^{V(G) \times V(G)}\), where \(A_{uv}\) is the weight of the edge joining \(u,v \in V(G)\), or zero if the edge is absent. We evolve states \(z_n \in \bbR^{V(G)}\) by \(z_{n + 1} \coloneqq A z_n\). Starting from the indicator vector \(z_0 = e_v\) at \(v\), we have \((z_n)_u = (A^n)_{uv}\). This entry sums the products of edge weights along all walks of length \(n\) from \(v\) to \(u\). Thus the powers of \(A\) describe how states propagate through the graph.

\begin{example}\label{ex:path}
Suppose we have an undirected path \(a-c-b\) with unit edge weights, and suppose moreover that we may only observe the endpoints \(a\) and \(b\). The induced subgraph \(\{a, b\}\) has no edges. We write \(x_n \in \bbR^2\) for the states at \(a\) and \(b\), in that order, and write \(y_n \in \bbR\) for the state at \(c\). The original evolution satisfies
\[
 x_{n + 1} = \begin{pmatrix}1 \\ 1\end{pmatrix} y_n
 \qquad \text{and} \qquad
 y_{n + 1} = \begin{pmatrix}1 & 1\end{pmatrix} x_n.
\]
When \(y_0 = 0\), we have
\[
 x_1 = 0
 \qquad \text{and} \qquad
 x_2 = \begin{pmatrix}1 & 1 \\ 1 & 1\end{pmatrix} x_0.
\]
No matrix \(F\) acting on the endpoint coordinates can reproduce both steps for every \(x_0\). The first would require \(F = 0\), while the second is generally nonzero.

Eliminating \(y_n\), we obtain the exact endpoint recurrence
\[
 x_{n + 1} = \begin{pmatrix}1 & 1 \\ 1 & 1\end{pmatrix} x_{n - 1}
 \qquad \text{for} \ n \ge 1.
\]
The middle vertex has disappeared, but its influence remains as a delayed interaction.
\end{example}

In this note, we consider networks with hidden vertices more generally. In larger networks, influence can spend many steps among hidden vertices before returning, and cycles in the hidden subgraph can produce arbitrarily long delays. The observed evolution may therefore depend on past states even when the original network is memoryless.

\section{Networks with interfaces}\label{sec:networks-with-interfaces}

In this section, we generalize \Cref{ex:path} and study the resulting model.

We consider a finite-dimensional real vector space \(Z = X \oplus Y\), a linear transformation \(T: Z \to Z\), and an initial state \(z_0 \in Z\). We observe only the \(X\)-component of the autonomous evolution \(z_{n + 1} = T z_n\). Writing \(z_n = x_n + y_n\) with \(x_n \in X\) and \(y_n \in Y\), and decomposing \(T\) accordingly, we obtain
\begin{equation}\label{eqn:fundamental-decomposition}
 T = \begin{pmatrix}M & B \\ C & D\end{pmatrix} \qquad \text{and} \qquad
 \begin{aligned}
 x_{n + 1} &= M x_n + B y_n \\
 y_{n + 1} &= C x_n + D y_n.
 \end{aligned}
\end{equation}
Iterating the second equation of \eqref{eqn:fundamental-decomposition} gives
\[
 y_n = D^n y_0 + \sum_{j = 0}^{n - 1} D^{n - 1 - j}C x_j,
\]
so the observed coordinate \(x_n\) satisfies
\begin{equation}\label{eq:memory}
 x_{n + 1} = M x_n + \sum_{j = 0}^{n - 1}BD^{n - 1 - j}C x_j + BD^n y_0.
\end{equation}

The final term in \eqref{eq:memory} records the contribution of the hidden initial state. With \(y_0 = 0\), we define
\[
 K_0 \coloneqq M \qquad \text{and} \qquad
 K_j \coloneqq BD^{j - 1}C \qquad \text{for } j \ge 1.
\]
Then \eqref{eq:memory} becomes
\[
 x_{n + 1} = \sum_{j = 0}^{n} K_j x_{n - j}.
\]
The sequence \((K_j)_{j \ge 0}\) is the \emph{memory kernel} of the observed dynamics on \(X\). Its \(j\)th coefficient describes an interaction delayed by \(j\) steps, so memoryless systems have \(K_j = 0\) for \(j \ge 1\).

We extend sequences by zero to negative indices and use \(\zeta^{-1}\) to denote one step of delay, so that \(\zeta^{-1}x_n \coloneqq x_{n - 1}\), and likewise for \(y_n\) and \(z_n\). The memory kernel has generating function
\begin{equation}\label{eq:kernel}
\begin{aligned}
 K(\zeta) &\coloneqq \sum_{j \ge 0} K_j \zeta^{-j}\\
 &= M + B(\zeta I - D)^{-1}C.
\end{aligned}
\end{equation}
The second expression follows from the formal expansion \((\zeta I - D)^{-1} = \sum_{j \ge 0} D^j \zeta^{-j - 1}\). Thus eliminating finitely many internal coordinates naturally produces rational memory kernels.

We write \(i: X \hookrightarrow Z\) and \(p: Z \to X\) for the natural inclusion and projection maps. The operators
\begin{equation}\label{eqn:interface-evolution}
 G_n \coloneqq p T^n i \qquad \text{for } n \ge 0
\end{equation}
describe the evolution seen at the interface \(X\) when the internal state \(y_0\) is initially zero. Unlike the powers of \(T\), they need not satisfy \(G_{m + n} = G_m G_n\), because a trajectory may leave the interface \(X\) and return again later. The generating function for \eqref{eqn:interface-evolution} is
\begin{equation}\label{eq:response}
\begin{aligned}
 H_X(\zeta) &\coloneqq \sum_{n \ge 0} G_n \zeta^{-n - 1}\\
 &= p(\zeta I - T)^{-1}i = (\zeta I - K(\zeta))^{-1}.
\end{aligned}
\end{equation}
For the last equality, we solve \((\zeta I - T)(x + y) = x^\prime\) with \(x^\prime \in X\). The hidden equation in \eqref{eqn:fundamental-decomposition} gives \(y = (\zeta I - D)^{-1}Cx\), and the visible equation in \eqref{eqn:fundamental-decomposition} becomes \((\zeta I - K(\zeta))x = x^\prime\). Thus the compressed evolution determines the memory kernel through
\begin{equation}\label{eq:inverse}
 K(\zeta) = \zeta I - H_X(\zeta)^{-1}.
\end{equation}
In particular, the response describes evolution from states initially supported on the interface, not all observable trajectories arising from arbitrary hidden initial states. Conversely, every proper rational matrix kernel (one bounded at \(\zeta = \infty\)) has a finite-dimensional realization of the form \eqref{eq:kernel}, as in classical linear systems theory \cite{PW1998}. Thus finite-dimensional elimination gives precisely the proper rational kernels; a general memory kernel need not be rational.

We might observe still fewer coordinates. We fix a basis \((e_i)_{i \in \mathcal I}\) of \(X\), indexed by a finite set \(\mathcal I\), and retain only the coordinates indexed by \(S \subseteq \mathcal I\). We write \(W \coloneqq \mathcal I \setminus S\). For subsets \(A, B \subseteq \mathcal I\) and a matrix \(F\) indexed by \(\mathcal I\), we write \(F_{AB}\) for the submatrix with rows indexed by \(A\) and columns by \(B\), and \(I_S\) for the identity on the coordinates in \(S\). The observed evolution is described by \((G_n)_{SS}\) and therefore has response \(H_{SS}(\zeta)\), where \(H(\zeta) \coloneqq (\zeta I - K(\zeta))^{-1}\). By \eqref{eq:inverse}, its memory kernel is
\begin{equation}\label{eq:general-reduction}
\begin{aligned}
 \Reduction_S(K;\zeta)
 &\coloneqq \zeta I_S - H_{SS}(\zeta)^{-1}\\
 &= K_{SS}(\zeta) + K_{SW}(\zeta)(\zeta I_W - K_{WW}(\zeta))^{-1}K_{WS}(\zeta).
\end{aligned}
\end{equation}
The second expression follows by block elimination. The reduced kernel is again proper and rational. If \(U \subseteq S \subseteq \mathcal I\), observing \(U\) directly or through \(S\) gives the same response \(H_{UU}(\zeta)\), and therefore
\begin{equation}\label{eq:successive}
 \Reduction_U(\Reduction_S(K;\zeta);\zeta) = \Reduction_U(K;\zeta).
\end{equation}

\section{Isospectral reduction}

We now return to a finite undirected graph \(G\) with real edge weights and adjacency matrix \(A\). We retain a set \(S \subseteq V(G)\) of observed vertices and write \(W \coloneqq V(G) \setminus S\), giving the partition
\[
 A = \begin{pmatrix}A_{SS} & A_{SW} \\ A_{WS} & A_{WW}\end{pmatrix}.
\]

The general reduction \eqref{eq:general-reduction}, applied to the constant kernel \(K(\zeta) = A\), gives
\begin{equation}\label{eq:reduction}
 \Reduction_S(A;\zeta)
 = A_{SS} + A_{SW}(\zeta I - A_{WW})^{-1}A_{WS},
\end{equation}
which is the Bunimovich--Webb isospectral reduction \cite[Ch.~2]{BW2014}. The corresponding response is the restricted resolvent \cite[\S\S~2,~4]{KT2024}
\begin{equation}\label{eq:resolvent}
 (\zeta I - \Reduction_S(A;\zeta))^{-1}
 = \bigl((\zeta I - A)^{-1}\bigr)_{SS}.
\end{equation}

\begin{example}\label{ex:path-reduction}
Returning to \Cref{ex:path}, we order the vertices as \(a,b,c\) and retain \(S = \{a,b\}\). We obtain
\[
 A = \begin{pmatrix}0 & 0 & 1 \\ 0 & 0 & 1 \\ 1 & 1 & 0\end{pmatrix}
 \qquad \text{and} \qquad
 \Reduction_S(A;\zeta) = \frac{1}{\zeta}
 \begin{pmatrix}1 & 1 \\ 1 & 1\end{pmatrix}.
\]
In particular, the diagonal weights record excursions from an endpoint through the middle vertex and back, rather than loops in the original graph.
\end{example}

For \(j \ge 1\), the coefficient of \(\zeta^{-j}\) in \eqref{eq:reduction} is \(A_{SW}A_{WW}^{j - 1}A_{WS}\). Its entries sum the weights of walks of length \(j + 1\) between interface vertices whose intermediate vertices lie in \(W\) \cite[\S~3.2]{KSSW2020}.

We consider an eigenvector \(z \ne 0\) with \(A z = \lambda z\), where \(\lambda \ne 0\). The sequence \(z_n = \lambda^n z\), indexed over \(\mathbb Z\), is a trajectory of the system, which we call an \emph{exponential mode}. Delaying it by \(j\) steps multiplies it by \(\lambda^{-j}\). Thus the delay kernel is evaluated at \(\zeta = \lambda\). We decompose \(z = x + y\) into its observed and hidden coordinates. When \(\lambda I - A_{WW}\) is invertible, eliminating \(y\) from \(A z = \lambda z\) gives
\[
 \Reduction_S(A;\lambda)x = \lambda x.
\]
The Schur-complement identity
\begin{equation}\label{eq:determinant}
 \det(\zeta I - A) = \det(\zeta I - A_{WW})
                 \det(\zeta I - \Reduction_S(A;\zeta))
\end{equation}
recovers the full characteristic polynomial, including factors that may cancel against poles of the reduced determinant.

\begin{remark}
The constructions above do not require symmetry or real scalars. The memory-kernel formulas and the reduction and determinant identities hold for finite weighted directed graphs over any field \(k\), with rational expressions interpreted over \(k(\zeta)\) and their generating series over \(k((\zeta^{-1}))\). Of course, computing eigenvalues may require an extension of \(k\).

The Perron-centrality consequence below requires nonnegative real weights and a connected graph.
\end{remark}

\section{Latent symmetry}

The reduced graph can have symmetries absent from the original graph.

\begin{example}\label{ex:latent}
We consider an undirected weighted graph with vertices \(a,b,u,v,w\). We join \(a\) to \(b\) with weight \(1\), \(a\) to \(u\) with weight \(5\), and \(b\) to \(v\) and \(w\) with weights \(3\) and \(4\), respectively.
\begin{center}
\begin{tikzpicture}[scale=0.95, every node/.style={font=\small}, vertex/.style={circle,draw,minimum size=6mm,inner sep=0pt}]
 \node[vertex,thick] (a) at (-3.25,0) {\(a\)};
 \node[vertex,thick] (b) at (-1.05,0) {\(b\)};
 \node[vertex] (u) at (-4.28,0.9) {\(u\)};
 \node[vertex] (v) at (-0.03,0.9) {\(v\)};
 \node[vertex] (w) at (-0.03,-0.9) {\(w\)};
 \draw (a) -- (b);
 \draw (a) -- (u);
 \draw (b) -- (v);
 \draw (b) -- (w);
 \node at (-2.15,0.32) {\(1\)};
 \node at (-4.02,0.35) {\(5\)};
 \node at (-0.65,0.72) {\(3\)};
 \node at (-0.65,-0.72) {\(4\)};
 \draw[->] (0.42,0) -- (1.08,0);
 \node[vertex,thick] (ra) at (1.95,0) {\(a\)};
 \node[vertex,thick] (rb) at (4.15,0) {\(b\)};
 \draw (ra) -- (rb);
 \draw (ra) edge[loop above,min distance=9mm,looseness=7] (ra);
 \draw (rb) edge[loop above,min distance=9mm,looseness=7] (rb);
 \node at (3.05,0.32) {\(1\)};
 \node at (1.95,1.38) {\(25/\zeta\)};
 \node at (4.15,1.38) {\(25/\zeta\)};
\end{tikzpicture}
\end{center}
We retain \(S = \{a,b\}\) and obtain
\[
 \Reduction_S(A;\zeta)
 = \begin{pmatrix}25/\zeta & 1 \\ 1 & 25/\zeta\end{pmatrix},
\]
The edge of weight \(1\) retains the direct interaction between \(a\) and \(b\). Each loop of weight \(25/\zeta\) represents a one-step delay: its coefficient is \(5^2\) at \(a\) and \(3^2 + 4^2\) at \(b\), from two-step walks through the eliminated vertices. The reduced graph admits the transposition of \(a\) and \(b\), whereas the original graph does not, since \(a\) has two neighbors and \(b\) has three.
\end{example}

An automorphism of the reduced graph is a permutation of \(S\) commuting with its rational adjacency matrix. The compressed evolution provides an equivalent description.

\begin{proposition}\label{prop:symmetry}
We write \(H_S(\zeta) \coloneqq ((\zeta I - A)^{-1})_{SS}\). For any permutation matrix \(Q\) on \(S\), the following conditions are equivalent.
\[
 \begin{aligned}
 \text{(i)}\quad &Q \Reduction_S(A;\zeta) = \Reduction_S(A;\zeta)Q \\
 \text{(ii)}\quad &Q H_S(\zeta) = H_S(\zeta) Q \\
 \text{(iii)}\quad &Q(A^n)_{SS} = (A^n)_{SS} Q
 \qquad \text{for } n \ge 0.
 \end{aligned}
\]
\end{proposition}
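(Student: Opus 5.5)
The plan is to prove (i)\(\Leftrightarrow\)(ii) using the resolvent identity \eqref{eq:resolvent}, and (ii)\(\Leftrightarrow\)(iii) by comparing coefficients in the Laurent expansion at \(\zeta = \infty\). Throughout, identities between rational matrix functions are read in \(\bbR(\zeta)\), or equivalently in \(\bbR((\zeta^{-1}))\), so evaluation at particular poles never arises.

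For (i)\(\Leftrightarrow\)(ii), abbreviate \(R(\zeta) \coloneqq \Reduction_S(A;\zeta)\). Then \eqref{eq:resolvent} gives \(H_S(\zeta) = (\zeta I_S - R(\zeta))^{-1}\), and this matrix is invertible over \(\bbR(\zeta)\). Indeed, \(H_S(\zeta) = \sum_{n\ge0}(A^n)_{SS}\zeta^{-n-1}\) has leading term \(I_S\zeta^{-1}\), so \(\det H_S(\zeta)\) is a nonzero element of \(\bbR((\zeta^{-1}))\).
\begin{itemize}
\item Suppose \(Q\) commutes with \(R(\zeta)\). It also commutes with \(\zeta I_S\), hence with \(\zeta I_S - R(\zeta)\). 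If a matrix commutes with an invertible matrix, it commutes with the inverse, since \(Q N = N Q\) implies \(N^{-1}Q = Q N^{-1}\). So \(Q\) commutes with \(H_S(\zeta)\).
\item Conversely, suppose \(Q\) commutes with \(H_S(\zeta)\). By the same argument it commutes with \(H_S(\zeta)^{-1} = \zeta I_S - R(\zeta)\). Since it commutes with \(\zeta I_S\), it commutes with \(R(\zeta)\).
\end{itemize}
This is exactly \eqref{eq:inverse} specialized to the graph setting.

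For (ii)\(\Leftrightarrow\)(iii), expand \((\zeta I - A)^{-1} = \sum_{n \ge 0} A^n \zeta^{-n-1}\) in \(\bbR((\zeta^{-1}))\). Taking the \(SS\) block gives
\[
 H_S(\zeta) = \sum_{n\ge0} (A^n)_{SS}\,\zeta^{-n-1}.
\]
This is the generating function \eqref{eq:response} with \(G_n = (A^n)_{SS}\). Because \(Q\) is a constant matrix, the two sides of (ii) expand as
\[
 Q H_S(\zeta) = \sum_{n\ge0} Q(A^n)_{SS}\,\zeta^{-n-1}
 \qquad\text{and}\qquad
 H_S(\zeta) Q = \sum_{n\ge0} (A^n)_{SS}Q\,\zeta^{-n-1}.
\]
The map from \(\bbR(\zeta)\) to Laurent series in \(\zeta^{-1}\) is injective, so a rational identity holds exactly when all of its coefficients agree. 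Hence (ii) holds if and only if \(Q(A^n)_{SS} = (A^n)_{SS}Q\) for every \(n \ge 0\), which is (iii).

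The only point needing care is the first step: justifying that \(H_S(\zeta)\) and \(\zeta I_S - R(\zeta)\) are genuinely invertible over the field \(\bbR(\zeta)\), not just formally. The leading-term argument above settles it. Nothing in the argument uses that \(Q\) is a permutation matrix or that \(A\) is symmetric, consistent with the Remark.
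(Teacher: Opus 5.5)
Your proposal is correct and is the paper's argument: (i)\(\Leftrightarrow\)(ii) comes from the restricted-resolvent identity \eqref{eq:resolvent}, and (ii)\(\Leftrightarrow\)(iii) comes from comparing coefficients of the expansion \eqref{eq:response} at \(\zeta = \infty\). You make explicit the details the paper leaves implicit: a matrix commuting with an invertible matrix also commutes with its inverse, \(H_S(\zeta)\) is invertible over \(\bbR(\zeta)\), and the embedding \(\bbR(\zeta) \hookrightarrow \bbR((\zeta^{-1}))\) is injective.
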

\begin{proof}
The first two conditions are equivalent by \eqref{eq:resolvent}. The third is the coefficientwise form of the second, by \eqref{eq:response}.
\end{proof}

Thus a latent automorphism preserves the entire interface response, even if it does not extend to an automorphism of the original graph \cite{SW2019}.

For symmetric \(A\) and \(S = \{a, b\}\), transposition of \(a\) and \(b\) commutes with \(H_S(\zeta)\) exactly when
\[
 (A^n)_{aa} = (A^n)_{bb} \qquad \text{for } n \ge 0.
\]
These equalities of closed-walk counts characterize cospectral vertices.

Response symmetries also constrain Perron eigenvector centrality. We assume \(A\) is real, nonnegative, and irreducible, and write \(\rho\) for its Perron eigenvalue and \(r,s\) for positive right and left eigenvectors. The interface resolvent has residue
\[
 \operatorname*{Res}_{\zeta = \rho}H_S(\zeta)
 = \frac{r_S s_S^{\mathsf T}}{s^{\mathsf T} r}.
\]
If \(Q\) commutes with \(H_S(\zeta)\), it preserves the image of this residue. Hence \(Q r_S = c r_S\) for some \(c > 0\). Since \(Q\) has finite order, \(c = 1\), and vertices exchanged by \(Q\) have equal Perron eigenvector centrality. In \Cref{ex:latent}, \(a\) and \(b\) consequently have equal Perron centrality despite their different degrees.

\section{Further reading}

Bunimovich and Webb's monograph is the foremost exposition of isospectral reduction \cite{BW2014}. Its branch formula expresses rational weights as sums over paths through eliminated vertices \cite[Ch.~2]{BW2014}. Kempton, Sinkovic, Smith, and Webb \cite[\S~3.2]{KSSW2020} give the walk-generating interpretation, while Kempton and Tolbert \cite{KT2024} develop restricted resolvents and their connection with quantum walks. Smith and Webb \cite{SW2019} study latent symmetry and eigenvector centrality, and the connection with cospectral vertices is developed in \cite{KSSW2020}.

Our memory calculation in \Cref{sec:networks-with-interfaces} arises as the elimination of hidden states in linear systems theory; see Polderman and Willems \cite{PW1998} on realizations. The Mori--Zwanzig formalism, derived for linear systems by Gouasmi, Parish, and Duraisamy \cite[\S~2]{GPD2017}, likewise describes unresolved variables by a history-dependent term. Bunimovich and Webb also study delayed interactions induced by restricting dynamical networks \cite[Ch.~3]{BW2014} \cite{BW2013}.

\end{document}